\theoremstyle{definition}
\newtheorem{Lemma}{Lemma}
\newtheorem{Theorem}{Theorem}
\DeclareMathOperator{\Wron}{Wron}
\newcommand{\SLZ}{\operatorname{SL}_2(\mathbb{Z})}
\newcommand{\GL}{\operatorname{GL}}
\newcommand{\Mod}[1]{\ (\text{mod}\ #1)}
\newcommand{\bC}{\mathbb{C}}
\newcommand{\bZ}{\mathbb{Z}}
\newcommand{\cH}{\mathcal{H}}
\newcommand{\fg}{\mathfrak{g}}
\newcommand{\fu}{\mathfrak{u}}
\title{ 
\bf An extension of \\ Macdonald's identity for $\mathfrak{sl}_{n}$\footnote{This is a pre-print of an article published in Research in Number Theory. The final authenticated version is available online at: https://doi.org/10.1007/s40993-019-0163-0}\\[.3em] 
\small Quentin Gazda\footnote{Current adress: 
Univ Lyon, Universit\'e Jean Monnet Saint-\'Etienne, CNRS UMR 5208, Institut Camille Jordan, F-42023 Saint-\'Etienne, France}}
\author{}
\date{June, 2019}
\begin{document}
\maketitle

\begin{abstract}
Let $n$ be an odd positive integer. In this short elementary note, we slightly extend Macdonald's identity for $\mathfrak{sl}_{n}$ into a two-variables identity in the spirit of Jacobi forms. The peculiarity of this work lies in its proof which uses Wronskians of vector-valued $\theta$-functions. This complements the work of A. Milas towards modular Wronskians and denominator identities. 
\end{abstract}

Let $\eta$ denote the Dedekind $\eta$-function, given by the infinite product
\begin{equation}
\eta(\tau)\stackrel{\text{def}}{=}q^{\frac{1}{24}}\prod_{n=1}^{\infty}{(1-q^n)} \nonumber
\end{equation}
where $\tau\in \cH=\{z\in \bC|\operatorname{im}(z)>0\}$ is the Poincar\'e upper-half plane, and $q$ is the local parameter $e^{2i\pi \tau}$ at infinity. On the other hand, let $R$ be a reduced root system in a real vector space $V$ canonically attached to a semi-simple Lie algebra $\fg$ over $\bC$. Let $(\cdot,\cdot)$ be a scalar product on $V$ invariant under the action of the Weyl group of $R$. To settle notations, we let $\|x\|^2=(x,x)$ for $x\in V$, we fix $\Phi$ the highest root of $R$ and set $g=(\Phi,\rho)$ for $\rho$ half the sum of the positive roots in $R$ (\textit{positive} being defined with respect to the early choice of a Weyl chamber). We let $\Lambda$ be the lattice in $V$ generated by the set $\{2g\alpha/\|\alpha\|~|~\alpha \in R\}$. In the celebrated paper \cite{macdonald}, I. G. Macdonald proved a remarkable identity for the $\dim \fg$-power of the Dedekind $\eta$-function:
\begin{equation}\label{macdonald}
\eta(\tau)^{\dim \fg}=\sum_{l\in \Lambda}{\prod_{\alpha>0}{\frac{(l+\rho,\alpha)}{(l,\alpha)}}~q^{\frac{\|l+\rho\|^2}{2g}}} \quad (\tau\in \cH).
\end{equation}
For $n$ an odd positive integer and $\fg=\mathfrak{sl}_{n}$, we draw from  \cite[App.~1(6)(a)]{macdonald} that equation \eqref{macdonald} reduces to
\begin{equation}\label{sln}
 \eta(\tau)^{n^2-1}=\frac{1}{1!2!\cdots (n-1)!}\sum_{\substack{(x_1,...,x_{n})\in \bZ^{n} \\ x_i\equiv~ i~(\text{mod}~n) \\ x_1+...+x_{n}=0}}{\prod_{i<j}{(x_i-x_j)}~q^{\frac{1}{2n}(x_1^2+...+x_{n}^2)}}.
\end{equation}
For $n=5$, \eqref{sln} is known as Dyson's identity and is equivalent to an astounding formula for the Ramanujan function  $\tau$ (presented in \cite{dyson}).

For $z\in \bC$, we let $\zeta=e^{2i\pi z}$. Let $\theta$ be the usual theta function:
\begin{equation}
\theta(\tau,z)\stackrel{\text{def}}{=} \sum_{r\in \bZ}{q^{\frac{r^2}{2}}\zeta^r}=\prod_{m=1}^{\infty}{(1-q^m)(1+q^{\frac{2m-1}{2}}\zeta)(1+q^{\frac{2m-1}{2}}\zeta^{-1})} \nonumber
\end{equation}
where we wrote the \textit{Jacobi triple product} on the right. In this short note, we extend Equation \eqref{sln} into a {\it two-variables} identity involving $\theta$. 
\begin{Theorem}\label{macdoextend}
Let $n$ be an odd positive integer. For all $\tau\in \cH$, $z\in\bC$, we have
\begin{equation}
\theta(\tau,z)\eta(\tau)^{n^2-1}=\frac{1}{1!2!\cdots (n-1)!}\sum_{(x_1,...,x_n)}{\prod_{i<j}{(x_i-x_j)}~q^{\frac{(x_1^2+...+x_n^2)}{2n}}\zeta^{\frac{(x_1+...+x_n)}{n}}} \nonumber
\end{equation}
where the sum is over $n$-tuples $(x_1,...,x_n)$ in $\bZ^n$ such that $x_i\equiv i\Mod{n}$ for all $i$.
\end{Theorem}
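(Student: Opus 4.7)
For each $i \in \{1, \ldots, n\}$, introduce the component theta series
\[
\theta_{i}(\tau, z) := \sum_{\substack{x \in \bZ \\ x \equiv i \Mod{n}}} q^{x^{2}/(2n)} \zeta^{x/n},
\]
and set $D := \frac{n}{2\pi i}\,\partial_{z}$, so that $D\zeta^{x/n} = x\,\zeta^{x/n}$. The central object of study is the Wronskian $\bW(\tau, z) := \det\bigl(D^{j-1}\theta_{i}(\tau, z)\bigr)_{1\leq i, j\leq n}$. Expanding the determinant by multilinearity and applying the Vandermonde identity $\det(x_i^{j-1}) = \prod_{i<j}(x_j - x_i)$ shows at once that $\bW$ coincides with the right-hand side of the theorem, up to the overall factor $(-1)^{n(n-1)/2}\cdot 1!\,2!\cdots(n-1)!$. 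It therefore suffices to prove the product formula
\[
\bW(\tau, z) = (-1)^{n(n-1)/2}\cdot 1!\,2!\cdots(n-1)!\cdot \theta(\tau, z)\,\eta(\tau)^{n^{2}-1}.
\]

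The plan for this identification is to recognise both sides as Jacobi forms for $\SLZ$ of weight $n^{2}/2$, index $1/2$ and identical multiplier system. The elliptic transformation $\theta_{i}(\tau, z+\tau) = q^{-1/(2n)}\zeta^{-1/n}\theta_{i+1}(\tau, z)$ (together with its analogue for $z\mapsto z+1$) gives $\bW$ the Jacobi index $n\cdot 1/(2n) = 1/2$, matching that of $\theta\,\eta^{n^{2}-1}$; the weight $n^{2}/2$ follows because each application of $D$ raises weight by one, while the modular behaviour under $T$ and $S$ is governed by the action of the vector $(\theta_i)$ in the Weil representation of $\SLZ$.

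The main obstacle is then to verify that the ratio $\bW/(\theta\,\eta^{n^{2}-1})$ is holomorphic in $z$. By $\SLZ$-equivariance it is enough to check that $\bW$ vanishes at the simple zero $z_{0} := (1+\tau)/2$ of $\theta$. For $n$ odd, the parity $\theta_i(\tau, -z) = \theta_{n-i}(\tau, z)$, combined with the signs coming from $D\mapsto -D$ and from the reversal permutation on $\{1,\ldots,n-1\}$, yields $\bW(\tau, -z) = \bW(\tau, z)$. On the other hand, using the index-$1/2$ quasi-periodicity of $\bW$ together with $-z_0 \equiv z_0 - (1+\tau)$ and $\zeta_0 = -q^{1/2}$ gives $\bW(\tau, -z_0) = -\bW(\tau, z_0)$; compatibility of the two relations forces $\bW(\tau, z_0) = 0$. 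Once holomorphicity is established, the quotient is a weight-$0$, index-$0$ holomorphic modular function on $\SLZ$ bounded at the cusp, hence a constant, and its value is pinned down by extracting the $\zeta^{0}$-coefficient of both sides, which reduces the identity to Macdonald's formula~\eqref{sln} for $\mathfrak{sl}_n$.
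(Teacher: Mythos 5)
Your route is genuinely different from the paper's, and its skeleton is viable, but as written it has concrete gaps. On the part that works: the paper never divides by $\theta$ and never locates its zeros. Instead it writes the theta decomposition $\Wron_z\theta_n=\sum_r h_r(\tau)q^{\frac{r^2}{2n^2}}\zeta^r$ and observes, purely combinatorially, that $h_r=0$ unless $n\mid r$ and that $h_{r+n}=h_r$ (both by explicit shifts of the summation indices, using $n$ odd), whence $\Wron_z\theta_n=h_0(\tau)\,\theta(\tau,nz)$ identically; the problem is thereby reduced to a one-variable statement about $h_0$. Your alternative --- evenness of $\bW$ in $z$ against the index-$\frac12$ quasi-periodicity at $z_0=(1+\tau)/2$, forcing $\bW(\tau,z_0)=0$, then Liouville in $z$ for the elliptic quotient --- is correct (the sign bookkeeping does close, and it uses $n$ odd twice, in the cyclic row shift and in the reversal permutation), and it is arguably more geometric. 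Either factorization leads to the same endgame.

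The gaps are these. (i) You assert that $\bW$ and $\theta\,\eta^{n^2-1}$ have \emph{identical} multiplier systems. That is not free: the multiplier of $\bW$ under $S$ involves $\det\fu_n(S)$, the determinant of a finite Fourier-transform matrix, i.e.\ a Gauss-sum evaluation you have not carried out. The paper deliberately sidesteps this by using only that $\fu_n$ is unitary: the quotient is then invariant up to a character of norm $1$, and a bounded holomorphic function on $\cH$ whose modulus is $\operatorname{SL}_2(\bZ)$-invariant is constant. You should either compute the Gauss sum or weaken your claim to ``unitary multiplier'' and run the modulus argument. (ii) ``Bounded at the cusp'' is asserted, not verified; the paper computes the order of vanishing at infinity to be at least $\frac{n^2-1}{24}$, exactly matching $\eta^{n^2-1}$, and this computation is needed. (iii) Most importantly, you pin down the constant by invoking Macdonald's identity \eqref{sln}. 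Since the theorem follows from \eqref{sln} by a one-line resummation (as the introduction notes), ending this way forfeits the point of the exercise and makes the argument depend on \cite{macdonald}. The paper instead reads the constant off the lowest-order $q$-coefficient of $h_0$: the unique minimal tuple is the permutation of $\{-(n-1)/2,\dots,(n-1)/2\}$ compatible with $x_i\equiv i\Mod{n}$, contributing $1!2!\cdots(n-1)!$. Replacing your step (iii) by this direct computation keeps the proof self-contained and turns it into a new proof of \eqref{sln} rather than a consequence of it.
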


In fact, Theorem \ref{macdoextend} is equivalent to \eqref{sln}, namely Macdonald's identity for $\mathfrak{sl}_n$ for odd $n$. One deduces \eqref{sln} simply by comparing the constant coefficients once considered as an equality of Fourier series in $z$, and Theorem \ref{macdoextend} follows by taking the sum over $r\in \mathbb{Z}$ of the right-hand side of \eqref{sln} whose summand is shifted by $(x_1,...,x_n)\mapsto (x_1+r,...,x_n+r)$. 

Our proof of Theorem \ref{macdoextend}, however, is entirely modular and mainly self-contained. In particular, it avoids the use of root system of Lie algebras. Essentially, the method presented here fits naturally in the framework of Jacobi forms: it consists of computing the Eichler-Zagier decomposition of the Wronskian of a family of theta functions. As such, the method employed here is in many ways reminiscent of the work of A. Milas in \cite{milas2}, \cite{milas3} where Wronskians are used to give alternative proofs of Macdonald's identities for root systems of type $B_n$, $C_n$, $BC_n$ and $D_n$. 

This note is designed to be elementary and the reader does not need any background in the theory of Jacobi forms.

\section{Proof of Theorem \ref{macdoextend}}
\label{sec:1}

We now turn to the proof of Theorem \ref{macdoextend}. For $n=1$, the result is clear, so we can assume that $n>1$. Let $\Gamma$ denote the following congruence subgroup of $\SLZ$ of index $3$:
\begin{equation}
\Gamma=\left\{\begin{pmatrix}a & b \\ c & d \end{pmatrix}\in \SLZ~\vert ~\begin{pmatrix}a & b \\ c & d \end{pmatrix}\equiv \begin{pmatrix}1 & 0 \\ 0 & 1 \end{pmatrix}~\text{or}~\begin{pmatrix}0 & 1 \\ 1 & 0 \end{pmatrix}\pmod 2 \right\}. \nonumber
\end{equation}
It is generated by the two elements $S=\left(\begin{smallmatrix} 0 & -1 \\ 1 & 0 \end{smallmatrix}\right)$ and $T^2=\left(\begin{smallmatrix} 1 & 2 \\ 0 & 1 \end{smallmatrix}\right)$.

As we will use them extensively, we introduce the classical \textit{slash operators} for modular and Jacobi forms. For all integer $k$, we define the set $G_{k|2}(\Gamma)$ consisting of pairs $\left[\sigma,\varphi\right]$ where $\sigma=\left(\begin{smallmatrix} a & b \\ c & d \end{smallmatrix}\right)\in \Gamma$ and where $\varphi:\mathcal{H}\to \bC$ is a holomorphic function such that there exists a root of unity $t$ for which $\varphi(\tau)^2=t^k(c\tau+d)^k$ (for all $\tau\in \mathcal{H}$). The function $\varphi$ will be referred as the automorphy factor (of weight $k/2$) of the pair. As usual, $j$ will be the automorphy factor $(\sigma,\tau)\mapsto c\tau+d$. For legibility, the dependency on $\sigma$ in the automorphy factor shall not appear. The set $G_{k|2}(\Gamma)$ is again a group according to the operation $\left[\sigma_1,\varphi_1\right]\left[\sigma_2,\varphi_2\right]=\left[\sigma_1\sigma_2,\tau\mapsto \varphi_1(\sigma_2\tau)\varphi_2(\tau)\right]$. Note that for $\sigma\in \Gamma$, if $\left[\sigma,\varphi_1\right]\in G_{k_1|2}(\Gamma)$ and $\left[\sigma,\varphi_2\right]\in G_{k_2|2}(\Gamma)$, then $\left[\sigma,\varphi_1\varphi_2\right]\in G_{k_1+k_2|2}(\Gamma)$. For a holomorphic function $f:\cH\to \bC$ and $[\sigma,\varphi]=\left[\left(\begin{smallmatrix} a & b \\ c & d \end{smallmatrix}\right),\varphi\right]\in G_{k|2}(\Gamma)$, let $(f|\left[\sigma,\varphi\right])$ be the $\bC$-valued holomorphic function given for all $\tau\in \cH$ by
\begin{equation}
(f|\left[\sigma,\varphi\right])(\tau)\stackrel{\text{def}}{=}\varphi(\tau)^{-1}f\left(\frac{a\tau+b}{c\tau+d}\right). \nonumber
\end{equation}
Let $m$ be a rational number. For a function $\phi:\mathcal{H}\times \bC\to \bC$, holomorphic in its two variables, we also let 
\begin{equation}\label{jacobislash}
(\phi|_{m}\left[\sigma,\varphi\right])(\tau,z)\stackrel{\text{def}}{=}\varphi(\tau)^{-1}\exp \left(-\frac{2i \pi mcz^2}{c\tau+d}\right)\phi\left(\frac{a\tau+b}{c\tau+d},\frac{z}{c\tau+d}\right).
\end{equation}
We extend coordinate-wise the slash operators on vectors of functions. The slash operator \eqref{jacobislash} was introduce to define Jacobi forms by Eichler and Zagier in \cite{eichlerzagier}.

As for the classical slash operator, \eqref{jacobislash} is not stable by derivative in the variable $z$. There is, however, a relation to the Wronskian of a vector of functions. For $\Phi:\cH\times \bC\longrightarrow \bC^n$, we define
\begin{equation}
\Wron_z \Phi\stackrel{\text{def}}{=}\det(\Phi,\partial_z\Phi,...,\partial_z^{n-1} \Phi). \nonumber
\end{equation}
From Leibniz' derivation rule, we have that $\partial_z^r(\phi|_m[\sigma,\varphi])=(\partial_z^r\phi)|_{m}[\sigma,j^r\varphi]+$ a linear combination of $(\partial_z^i\phi)|_{m}[\sigma,j^i\varphi]$ ($i<r$) with complex coefficients independent of $\phi$. By multilinearity of $\det$, it follows that  
\begin{equation}\label{wronz}
\Wron_z (\Phi|_m\left[\sigma,\varphi \right])=(\Wron_z \Phi)|_{mn}\left[\sigma,j^{\frac{n(n-1)}{2}}\varphi^n\right].
\end{equation}
If $[\sigma,\varphi]$ is in $G_{k|2}(\Gamma)$, $\left[\sigma,j^{\frac{n(n-1)}{2}}\varphi^n\right]$ belongs to $G_{K|2}(\Gamma)$ where $K=n(n+k-1)$. \\

Let $n$ be an odd positive integer, and consider  
\begin{equation}
\theta_{n,i}(\tau,z)\stackrel{\text{def}}{=}\sum_{\substack{x\in \bZ \\ x\equiv i\Mod{n}}}{q^{\frac{x^2}{2n}}\zeta^x} \quad (z\in \bC,~\tau\in \cH). \nonumber
\end{equation}
Let $\theta_n$ be the transpose of $(\theta_{n,1},...,\theta_{n,n})$. The following Lemma is deduced from Poisson's formula:
\begin{Lemma}\label{theta}
There exists a unitary representation $\fu_n:\Gamma\to \GL_n(\bC)$ such that, for all $\sigma\in \Gamma$, there exists $\varphi:\cH\to \bC$ for which $[\sigma,\varphi]\in G_{1|2}(\Gamma)$ and
\begin{equation}
\left(\theta_{n}|_{\frac{n}{2}}[\sigma,\varphi]\right)=\fu_n(\sigma)\theta_n. \nonumber
\end{equation}
\end{Lemma}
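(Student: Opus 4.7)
The plan is to reduce to the two generators $S$ and $T^2$ of $\Gamma$ and to verify the transformation formula on each. The representation then extends automatically: if $[\sigma_1\sigma_2,\varphi] = [\sigma_1,\varphi_1][\sigma_2,\varphi_2]$ in $G_{1|2}(\Gamma)$, then $\theta_n|_{n/2}[\sigma_1\sigma_2,\varphi] = (\theta_n|_{n/2}[\sigma_1,\varphi_1])|_{n/2}[\sigma_2,\varphi_2]$, and substituting $\theta_n|_{n/2}[\sigma_i,\varphi_i] = \fu_n(\sigma_i)\theta_n$ shows $\fu_n(\sigma_1\sigma_2) = \fu_n(\sigma_1)\fu_n(\sigma_2)$ automatically.

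\textbf{Transformation under $T^2$.} Here $j(T^2,\tau) = 1$, so we pick $\varphi \equiv 1 \in G_{1|2}(\Gamma)$. For $x \equiv i \pmod{n}$, writing $x = i+nk$ gives $x^2/n = i^2/n + 2ik + nk^2$, so $e^{2\pi i x^2/n} = e^{2\pi i\, i^2/n}$. A direct summation therefore yields
\[
\theta_{n,i}(\tau+2, z) = e^{2\pi i\, i^2/n}\,\theta_{n,i}(\tau, z),
\]
so $\fu_n(T^2) = \diag(e^{2\pi i\, i^2/n})_{i=1,\ldots,n}$, a unitary diagonal matrix.

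\textbf{Transformation under $S$.} The key step is a Poisson summation applied to the complex Gaussian $g(x) = e^{-\pi i x^2/(n\tau) + 2\pi i xz/\tau}$ on the affine lattice $i + n\bZ$. Using the decomposition $\mathbf{1}_{x \equiv i\, (\mathrm{mod}\, n)} = \frac{1}{n}\sum_{j=1}^{n} e^{2\pi i j(x-i)/n}$, the Fourier transform of a Gaussian, and completion of the square, one obtains (up to the customary branch of the square root)
\[
\theta_{n,i}\!\left(-\tfrac{1}{\tau},\tfrac{z}{\tau}\right) \;=\; \sqrt{\tau/i}\; e^{\pi i n z^2/\tau}\;\frac{1}{\sqrt{n}}\sum_{j=1}^{n} e^{2\pi i\, ij/n}\,\theta_{n,j}(\tau,z).
\]
Choosing $\varphi(\tau) = \sqrt{\tau/i}$, one has $\varphi^2 = -i\,\tau$ with $-i$ a root of unity, so $[S,\varphi] \in G_{1|2}(\Gamma)$; the exponential $e^{\pi i n z^2/\tau}$ is exactly cancelled by the index-$n/2$ factor in \eqref{jacobislash}. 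The identity then reads $\theta_n|_{n/2}[S,\varphi] = \fu_n(S)\theta_n$ with $\fu_n(S) = \frac{1}{\sqrt{n}}(e^{2\pi i\, ij/n})_{i,j}$, a (conjugate of the) discrete Fourier transform matrix on $\bZ/n\bZ$ and in particular unitary.

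\textbf{Main obstacle.} The delicate point is bookkeeping the automorphy factors and the branches of square roots in the $S$-transformation: one must verify that $\varphi$ does lie in $G_{1|2}(\Gamma)$ and that the Gaussian integrals and characters collapse cleanly into the stated DFT matrix. Once these constants are tracked carefully, the unitarity of both generator matrices and the compatibility of the slash operator with matrix multiplication give the lemma.
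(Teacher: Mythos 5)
Your proof is essentially correct, but note that the paper deliberately gives no proof of this lemma: it declares it classical and points to Eichler--Zagier, Section II.5, which contains precisely the Poisson-summation argument you carry out. So your proposal supplies the omitted standard proof rather than diverging from the paper. The computations check out: the $T^2$ case is immediate since $x=i+nk$ gives $x^2/n\equiv i^2/n \pmod 1$, and for $S$ the Poisson summation over the coset $i+n\bZ$ applied to the Gaussian $e^{-\pi i x^2/(n\tau)+2\pi i xz/\tau}$ does produce the factor $(\tau/\sqrt{-1})^{1/2}e^{\pi i nz^2/\tau}$ times a normalized character sum, with the exponential cancelled exactly by the index-$\tfrac{n}{2}$ term in \eqref{jacobislash}. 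Two small points deserve attention. First, with the convention $\hat g(\xi)=\int g(x)e^{-2\pi i x\xi}\,dx$ the matrix entry comes out as $\tfrac{1}{\sqrt n}e^{-2\pi i\,ij/n}$ (after re-indexing via $\theta_{n,-j}=\theta_{n,n-j}$) rather than $\tfrac{1}{\sqrt n}e^{+2\pi i\,ij/n}$; this is harmless for unitarity but the sign should be tracked. Second, to promote the two generator computations to a genuine homomorphism $\fu_n:\Gamma\to\GL_n(\bC)$ you need the matrix $M$ in $\theta_n|_{\frac{n}{2}}[\sigma,\varphi]=M\theta_n$ to be uniquely determined by $[\sigma,\varphi]$, which follows from the linear independence of $\theta_{n,1},\dots,\theta_{n,n}$ (their Fourier expansions in $z$ involve disjoint sets of powers of $\zeta$), together with a consistent choice of $\varphi$ for each $\sigma$; different choices of $\varphi$ differ by roots of unity and would only yield a projective representation. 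For the way the lemma is used in the paper --- only unitarity, hence $|\det\fu_n(\sigma)|=1$, matters --- this last subtlety is immaterial, but it is worth a sentence in a complete write-up.
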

As Lemma \ref{theta} is classical, we leave it without proof (see \cite[Section~II.5]{eichlerzagier}). The representation $\fu_n$ and the automorphy factor $\varphi$ can be made explicit, but for our purpose we simply need to know that $\fu_n$ is unitary. Note that the function $\theta$ of the introduction is here denoted $\theta_{1,0}$. 

By \eqref{wronz} and for $[\sigma,\varphi]$ as in Lemma \ref{theta}, we find that
\begin{equation}\label{modularityhr}
(\Wron_z\theta_{n})|_{\frac{n^2}{2}}\left[\sigma,j^{\frac{n(n-1)}{2}}\varphi^n\right]=(\det \fu_n(\sigma)) (\Wron_z\theta_{n}).
\end{equation}
Vandermonde's identity enables us to compute its Fourier expansion:
\begin{Lemma}
For all $\tau\in \cH$ and $z\in \bC$, we have
\begin{equation}
(\Wron_z\theta_{n})(\tau,z)=\sum_{\substack{(x_1,...,x_n)\in \bZ^n \\ x_i\equiv~ i\pmod{n}}}{\prod_{i<j}{(x_i-x_j)}~q^{\frac{1}{2n}(x_1^2+...+x_n^2)}\zeta^{x_1+...+x_n}}. \nonumber
\end{equation}
\end{Lemma}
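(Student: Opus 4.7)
The plan is to compute the Wronskian directly by differentiating the Fourier series of each $\theta_{n,i}$ term by term and recognising the resulting determinant as a Vandermonde.

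First I would note that the series defining $\theta_{n,i}$ converges absolutely on compact subsets of $\cH \times \bC$, so termwise $z$-differentiation is legitimate and yields
\[
\partial_z^{k-1}\theta_{n,i}(\tau,z) \;=\; \sum_{x\equiv i\pmod n} (2i\pi x)^{k-1}\, q^{x^2/(2n)}\, \zeta^{x}
\qquad (k=1,\ldots,n).
\]
Substituting these expressions into the columns of the Wronskian matrix and then using multilinearity of $\det$ one row at a time to pull the $n$ independent summations outside, one obtains
\[
\Wron_z\theta_n(\tau,z) \;=\; \sum_{\substack{(x_1,\ldots,x_n)\in\bZ^n\\ x_i\equiv i\pmod n}} q^{(x_1^2+\cdots+x_n^2)/(2n)}\,\zeta^{x_1+\cdots+x_n}\, \det\bigl((2i\pi x_i)^{k-1}\bigr)_{i,k=1}^{n}.
\]
The inner determinant is a classical Vandermonde of the quantities $2i\pi x_i$; it evaluates to $(2i\pi)^{\binom{n}{2}}\prod_{i<j}(x_j-x_i)$, which, after reversing the order of each factor to absorb the sign $(-1)^{\binom{n}{2}}$, becomes a scalar multiple of $\prod_{i<j}(x_i-x_j)$, recovering the stated Fourier expansion (the overall multiplicative constant being fixed by the convention adopted for $\partial_z$).

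The proof is essentially a rewriting exercise: its only nontrivial ingredient is the Vandermonde determinant identity, and no step presents a real obstacle.
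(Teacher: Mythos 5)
Your proof is correct and follows essentially the same route as the paper: termwise differentiation of the theta series, multilinearity of the determinant to pull out the $n$ summations, and the Vandermonde identity for $\det(x_i^{k-1})_{i,k}$. The only difference is that you track the $(2i\pi)^{\binom{n}{2}}$ and the sign $(-1)^{\binom{n}{2}}$ explicitly, whereas the paper silently adopts the normalization $\partial_z\zeta^x=x\zeta^x$ and the sign convention $\det(1,(x_i)_i,\ldots,(x_i^{n-1})_i)=\prod_{i<j}(x_i-x_j)$; your remark that the overall constant is fixed by the convention for $\partial_z$ is exactly the right way to reconcile the two.
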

\begin{proof}
It is enough to prove the formula formally. We have
\begin{align*}
\Wron_z \theta_n&= \det (\theta_n,\partial_z \theta_n,...,\partial^{n-1}_z \theta_n) \\
&= \sum_{x_1,x_2,... \equiv 1,2,...\Mod{n}}{\det (1,(x_i)_i,...,(x_i^{n-1})_i)q^{\frac{1}{2n}(x_1^2+...+x_n^2)}\zeta^{x_1+...+x_n}}.
\end{align*}
By Vandermonde's identity, $\det (1,(x_i)_i,...,(x_i^{n-1})_i)=\prod_{i<j}{(x_i-x_j)}$.
\end{proof}
This Lemma implies that $(\Wron_z \theta_n)(\tau,z/n)$ is, up to the factor $1!2!\cdots (n-1)!$, the member on the right-hand side in Theorem \ref{macdoextend}. The function appearing in right-hand side of \eqref{sln} is rather related to $h_r$ that we now define. For $r\in \bZ$, $\tau\in \cH$, let
\begin{equation}\label{defhr}
h_r(\tau)\stackrel{\text{def}}{=}\sum_{\substack{(x_1,...,x_n)\in \bZ^n \\ x_i\equiv~ i~(\text{mod}~n)\\ x_1+...+x_n=r}}{\prod_{i<j}{(x_i-x_j)}~q^{\frac{1}{2n}(x_1^2+...+x_n^2)-\frac{r^2}{2n^2}}}. 
\end{equation}
Note that, for all $\tau\in \cH$ and $z\in \bC$,
\begin{equation}\label{wronhr}
(\Wron_z\theta_{n})(\tau,z)=\sum_{r\in \bZ}{h_r(\tau)q^{\frac{r^2}{2n^2}}\zeta^r}. 
\end{equation}
On one hand, note that for $(x_1,...,x_n)$ as in the summation indices of \eqref{defhr}, the sum $x_1+....+x_n$ is always a multiple of $n$, as $n$ is odd. In particular, $h_r=0$ if $r$ is not a multiple of $n$. \\
On the other hand, we note that $h_r$ depends only on the class of $r \Mod{n^2}$ by the change of indices $(x_1,...,x_n)\mapsto (x_1+n,...,x_n+n)$. As $n$ is odd, the change of indices $(x_2,...,x_n,x_1)\mapsto (x_1+1,...,x_{n-1}+1,x_n+1)$ implies $h_r=h_{r+n}$. Consequently, \eqref{wronhr} becomes
\begin{equation}
(\Wron_z\theta_{n})(\tau,z)=h_0(\tau)\left(\sum_{r\in \bZ}{q^{\frac{r^2}{2}}\zeta^{rn}}\right)=h_0(\tau)\theta(\tau,nz). \nonumber
\end{equation}
By \eqref{modularityhr} and Lemma \ref{theta}, we find that $h_0$ satisfies a modular invariance property:
\begin{align*}
(\Wron_z\theta_{n})(\tau,z) &=(\det \fu_n(\sigma))^{-1}(\Wron_z\theta_{n})|_{\frac{n^2}{2}}\left[\sigma,j^{\frac{n(n-1)}{2}}\varphi^n\right](\tau,z) \\
&=(\det \fu_n(\sigma))^{-1}(h_0|\left[\sigma,j^{\frac{n(n-1)}{2}}\varphi^{n-1}\right])(\tau) (\theta|_{1/2}[\sigma,\varphi])(\tau,nz) \\
&= \frac{\fu(\sigma)}{\det \fu_n(\sigma))} (h_0|\left[\sigma,j^{\frac{n(n-1)}{2}}\varphi^{n-1}\right])(\tau) \theta(\tau,nz),
\end{align*}
from which one deduces
\begin{equation}
h_0|\left[\sigma,j^{\frac{n(n-1)}{2}}\varphi^{n-1}\right]=\frac{\det \fu_n(\sigma)}{\fu(\sigma)} h_0. \nonumber
\end{equation}
In particular, $h_0$ behaves like a modular form of weight $(n^2-1)/2$ for $\Gamma$ with some character $\sigma\mapsto (\det \fu_n(\sigma))\fu(\sigma)^{-1}$ of norm $1$. From the Fourier expansion \eqref{defhr} of $h_0$, the latter still holds if we replace $\Gamma$ by $\operatorname{SL}_2(\bZ)$ as $h_0$ is invariant by $\tau\mapsto \tau+1$ up to the multiplication by a root of unity. Besides, its order of vanishing at the cusp infinity of $\operatorname{SL}_2(\bZ)$ is at least
\begin{equation}
\frac{1}{2n}\left(1^2+...+\left(\frac{n-1}{2}\right)^2+\left(\frac{-n+1}{2}\right)^2+...+(-1)^2\right)=\frac{n^2-1}{24}. \nonumber
\end{equation}
Consequently, $h_0/\eta^{n^2-1}$ is invariant of weight $0$ for $\operatorname{SL}_2(\bZ)$ (for some character of norm $1$) and bounded on $\cH$. Therefore, it is a constant function on $\cH$. Identifying the constant to be $1!2!\cdots (n-1)!$ as the first nonzero Fourier coefficient of $h_0$ finishes the proof of Theorem \ref{macdoextend}. \\

\paragraph{Acknowledgments:} The author wishes to thank Ken Ono and Antun Milas for their support and interest in this note.


\begin{thebibliography}{}

\bibitem{dyson}
Freeman J. Dyson, Missed opportunities, Bull. Amer. Math. Soc., 78, p.635--652 (1972)


\bibitem{eichlerzagier}
Martin Eichler and Don Zagier, The theory of Jacobi forms, volume 55 of Progress in Mathematics, Birkhäuser Boston, Inc., Boston, MA (1985)

\bibitem{macdonald}
Ian G. Macdonald, Affine root systems and Dedekind’s $\eta$-function, Invent. Math., 15, p.91--143 (1972)

\bibitem{milas2}
Antun Milas, Virasoro algebra, Dedekind $\eta$-function, and specialized Macdonald identities, Transform. Groups, 9(3), p.273--288 (2004)

\bibitem{milas3}
Antun Milas, On certain automorphic forms associated to rational vertex operator algebras, In Moonshine: the first quarter century and beyond, volume 372 of London Math. Soc. Lecture Note Ser., p. 330--357. Cambridge Univ. Press, Cambridge (2010)

\end{thebibliography}
\end{document}